\numberwithin{equation}{subsection}
\newtheorem{theorem}{Theorem}
\newcommand{\hypgeo}[2]{%
  {\vphantom{F}}_{#1}\kern-\scriptspace F_{#2}%
}
\begin{document}

\begin{flushleft}
    {\Large\bf Stochastic Models of Stem Cells and Their Descendants under Different Criticality Assumptions}
\end{flushleft}
 
\begin{center}
    Nam H Nguyen\footnote{CONTACT: hn17@rice.edu} and Marek Kimmel
\end{center}
\begin{center}
    Department of Statistics, Rice University, Houston, TX, USA
\end{center}
  
\begin{abstract}
    \noindent We study time continuous branching processes with exponentially distributed lifetimes, with two types of cells that proliferate according to binary fission. A range of possible system dynamics are considered, each of which is characterized by the mutation rate of the original cells and the survival probability of the altered cells' progeny. For each system, we derive a closed-form expression for the joint probability generating function of cell counts, and perform asymptotic analysis on the behaviors of the cell population with particular focus on probability of extinction. Part of our results confirms known properties of branching processes using a different approach while other are original. While the model is best suited for modeling the fate of differentiating stem cells, we discuss other scenarios in which these system dynamics may be applicable in real life. We also discuss the history of the subject.
\end{abstract}  

\noindent\textbf{Keywords}: stochastic processes, population dynamics, multi-type branching processes, exact representation

\newpage

%%%%%%%%%%%%%%%%%%%%%%%%%%%%%%%%%%%%%%%%%%%%%%%%%%%%%%%%%%%%%%%%%%%%%%%%%%%%%%%%%%%%%%%%%%%%%%%

\section{Introduction}

All cells existing in muticellular organisms originate from a single fertilized egg. In embryonic development, the descendants of this single cell migrate and differentiate to create diverse tissues and organs. Even in a mature organism, the functional cells which perform a range of tasks have to renew, since they are ``used up", and their building materials such as nucleic acids, proteins and others are recycled or disposed of. In most cases, this self-renewal system has a hierarchical structure, with the self-renewing stem cells ($A$-cells) at the top, which divide into two progeny, each of which may remain a stem cell or change (differentiate) into a commited cell ($B$-cell). It is known that stem cells are highly protected and that stem cells usually have exactly 2 progeny. In addition to this, under physiological conditions, the number of stem $A$-cells remains roughly constant, until the organism begins to age. The commitment is irreversible, and commited $B$-cells proliferate so that a constant flux of their descendants is ensured. The count of the committed $B$-cells remains roughly constant, too. In the presence of a constant flux of differentiated $A$-cells, this is only possible if the fraction of $B$-cells proceeding to self-renewal after division is less than $1$. If it is greater than or equal to $1$, then the system increases in volume (cell count), starting from the $B$-cells. Therefore, the basic system that is interesting under physiological conditions is, in the branching process language, a critical process of ${A}$-cells feeding into the sub-critical process of ${B}$-cells. A sufficient condition  for this to happen is that each $A$-cell progeny becomes (on average) a $B$-cell with probability equal to $1/2$, and each $B$-cell progeny remains (on average) a $B$-cell with probability less than $1/2$.

However, in the early development phase, during aging, or under other conditions in which differentiation of $B$-cells is altered, the sub-criticality of $B$-cells may not be ensured. Less likely, but not inconceivably, the $A$-cells might be sub- or super-critical (albeit just slightly). Hence, we will consider a range of possibilities. The process can be described as follows
\begin{alignat}{2}\label{dynamics}
    &A\rightarrow AA\qquad &&\text{with probability}\ (1-\alpha)^2\nonumber\\
    &A\rightarrow AB\qquad &&\text{with probability}\ 2\alpha(1-\alpha)\nonumber\\
    &A\rightarrow BB\qquad &&\text{with probability}\ \alpha^2\nonumber\\
    &B\rightarrow BB\qquad &&\text{with probability}\ p^2\\
    &B\rightarrow B\qquad &&\text{with probability}\ 2pq\nonumber\\
    &B\rightarrow \emptyset\qquad &&\text{with probability}\ q^2\nonumber
\end{alignat}
where $\alpha\in[0,1]$, $p\in[0,1]$, and $q=1-p$. In addition, we assume that the lifetimes of $A$-cells and $B$-cells are exponentially distributed with parameters $\lambda_A$ and $\lambda_B$ respectively.

The paper is organized as follows. In Section 2, we introduce notations that will be used for the rest of the paper. In Section 3, we mention useful properties of special functions, which will be used in the mathematical derivations. In Section 4.1, we perform exploratory analysis to understand how cell counts behave on average over time. Section 4.2 contains the main text of the paper, where we derive closed-form expressions for the joint probability generating functions that fully describe the population dynamics at any given time. Based on the explicit results, we study the asymptotic extinction probabilities of the system under different combinations of criticalities in Section 4.3. In Section 5, we  analyze the results and provide literature examples of diseases which might be, perhaps metaphorically, due to all types of deviations from the critical $A$-cells, sub-critical $B$-cells stereotype. Section 6 is the conclusion of the paper. 

%%%%%%%%%%%%%%%%%%%%%%%%%%%%%%%%%%%%%%%%%%%%%%%%%%%%%%%%%%%%%%%%%%%%%%%%%%%%%%%%%%%%%%%%%%%%%%%

\section{Notations} 
Let $F_A(x,y,t)$ be the joint probability generating function (PGF) for the number of cells at time $t$ given that the process is initiated by a single cell of type $A$. That is, let $Z_A(t)$ and $Z_B(t)$ be the number of $A$-cells and $B$-cells at time $t$ respectively. Then, $F_A(x,y,t)$ is defined as follows
\begin{align*}
    F_A(x,y,t)=E\big[x^{Z_A(t)}y^{Z_B(t)}\big]
\end{align*}
Since we start with one single cell of type $A$, the initial condition for $F_A$ is $F_A(x,y,0)=x$. Similarly, $F_B(x,y,t)$ is the joint PGF given that the process is initiated by a single cell of type $B$. Note that cells of type $A$ mutate irreversibly to cells of type $B$. Hence, $F_B(x,y,t)$ is independent of $x$, and hence can be written more succintly as $F_B(y,t)$. For consistency of notation, we will write $F_B(x,y,t)$ for the rest of the paper. For $F_B$, we start with one cell of type $B$, so the initial condition is $F_B(x,y,0)=y$.

\noindent Let $h_A(x,y)$ and $h_B(x,y)$ be the progeny PGFs for cells of type $A$ and cells of type $B$ respectively. From (\ref{dynamics}), it follows that
\begin{equation}\label{eq:progeny_A}
    h_A(x,y)=[(1-\alpha)x+\alpha y]^2
\end{equation}
\begin{equation}\label{eq:progeny_B}
    h_B(x,y)=(p+qy)^2
\end{equation}

%%%%%%%%%%%%%%%%%%%%%%%%%%%%%%%%%%%%%%%%%%%%%%%%%%%%%%%%%%%%%%%%%%%%%%%%%%%%%%%%%%%%%%%%%%%%%%%

\section{Special functions}

\noindent Throughout the paper, we will make use of various special functions. Let $I(a,z)$ be the modified Bessel function of order $a$ evaluated at $z$. The first derivative of $I(a,z)$ is given by
\begin{equation}\label{Bessel_derivative}
    \frac{d}{dz}I(a,z)=\frac{1}{2}(I(a-1,z)+I(a+1,z))
\end{equation}
The following recursive formulae is also useful to simplify the results
\begin{equation}\label{Bessel_recursive}
    I(a-1,z)-I(a+1,z)=\frac{2a}{z}I(a,z)
\end{equation}
For asymptotic analysis, we need the following large-$z$ approximation of $I(a,z)$
\begin{equation}\label{Bessel_asymptotic}
    I(a,z)\sim\frac{1}{\sqrt{2\pi z}}e^z
\end{equation}
Let $M(a,b,z)$ and $W(a,b,z)$ be the Whittaker functions (first kind and second kind, respectively) with parameters $a$ and $b$ evaluated at $z$. The first derivatives are given by
\begin{align}\label{Whittaker_derivative}
    &M'(a,b,z)=\Big(\frac{1}{2}-\frac{a}{z}\Big)M(a,b,z)+\frac{\frac{1}{2}+a+b}{z}M(a+1,b,z)\nonumber\\
    &W'(a,b,z)=\Big(\frac{1}{2}-\frac{a}{z}\Big)W(a,b,z)-\frac{1}{z}W(a+1,b,z)
\end{align}
Let $\hypgeo{1}{1}(a,b,z)$ and $U(a,b,z)$ denote the confluent hypergeometric functions (first kind and second kind, respectively) with parameter $a$ and $b$ evaluated at $z$. There is a simple algebraic relationship between the Whittaker functions and the confluent hypergeometric functions
\begin{align}\label{Whittaker_Hypergeometric}
    &M(a,b,z)=e^{-z/2}z^{a+1/2}\hypgeo{1}{1}\bigg(b-a+\frac{1}{2},1+2b,z\bigg)\nonumber\\
    &W(a,b,z)=e^{-z/2}z^{b+1/2}U\bigg(b-a+\frac{1}{2},1+2b,z\bigg)
\end{align}
The large-$z$ behavior of $\hypgeo{1}{1}(a,b,z)$ and $U(a,b,z)$ is given by
\begin{align}\label{Hypergeometric_asymptotic}
    &\hypgeo{1}{1}(a,b,z)=\frac{\Gamma(b)}{\Gamma(a)}e^zz^{a-b}(1+O(|z|^{-1}))\nonumber\\
    &U(a,b,z)=z^{-a}(1+O(|z|^{-1}))
\end{align}
Lastly, let $\hypgeo{2}{1}(a,b,c,z)$ be the Gaussian hypergeometric function with parameters $a$, $b$ and $c$ evaluated at $z$. The function is defined by the following series expansion
\begin{equation}\label{Hypergeometric_series}
    \hypgeo{2}{1}(a,b,c,z)=\frac{\Gamma(c)}{\Gamma(a)\Gamma(b)}\sum_{n=0}^\infty\frac{\Gamma(a+n)\Gamma(b+n)}{\Gamma(c+n)}\frac{z^n}{n!}
\end{equation}
with convergence guaranteed within the unit circle $|z|=1$. The first derivative of $\hypgeo{2}{1}(a,b,c,z)$ is given by
\begin{equation}\label{Hypergeometric_derivative}
    \hypgeo{2}{1}'(a,b,c,z)=\frac{ab}{c}\;\hypgeo{2}{1}(1+a,1+b,1+c,z)
\end{equation}
Note that the parameters of the special functions above can take complex values. Details on the properties of these special functions can be found in \cite{Abramowitz,Gradshteyn}

%%%%%%%%%%%%%%%%%%%%%%%%%%%%%%%%%%%%%%%%%%%%%%%%%%%%%%%%%%%%%%%%%%%%%%%%%%%%%%%%%%%%%%%%%%%%%%%

\section{Results}

\subsection{Analysis of expectations}

Let $E_A(t)$ be the expected number of cells of type $A$ at time $t$. Similarly, let $E_B(t)$ be the expected number of cells of type $B$ at time $t$. From (\ref{dynamics}), we have that $E_A$ satisfies
\begin{equation*}
    \frac{dE_A}{dt}=\lambda_A(1-2\alpha)E_A
\end{equation*}
Under the initial condition $E_A(0)=1$, the solution is $E_A(t)=e^{\lambda_A(1-2\alpha)t}$. Note that $E_A(t)$ is identically $1$ when $\alpha=1/2$, which is expected from a critical process. From the dynamics of cells of type $B$ in (\ref{dynamics}), we have that $E_B(t)$ must satisfy the following first-order linear differential equation
\begin{equation*}
    \frac{dE_B}{dt}=2\lambda_A(1-\alpha)e^{\lambda_A(1-2\alpha)t}+\lambda_B(q^2-p^2)E_B
\end{equation*}
At time $t=0$, we have no cells of type $B$. Therefore, the initial condition is simply $E_B(0)=0$. If all cells are critical (i.e., $\alpha=1/2$ and $p=q=1/2$; we call this case ``bi-critical''), the equation above simplifies, and the general solution is $E_B(t)=\lambda_At$. If the system is not bi-critical, we obtain the general solution
\begin{equation*}
    E_B(t)=\frac{2\lambda_A(1-\alpha)}{\lambda_A(1-2\alpha)-\lambda_B(q^2-p^2)}e^{\lambda_A(1-2\alpha)t}+Ce^{\lambda_B(q^2-p^2)t}
\end{equation*}
Under the initial condition $E_B(0)=0$, the particular solution is given by
\begin{equation*}
    E_B(t)=\frac{2\lambda_A(1-\alpha)}{\lambda_A(1-2\alpha)-\lambda_B(q^2-p^2)}\Big(e^{\lambda_A(1-2\alpha)t}-e^{\lambda_B(q^2-p^2)t}\Big)
\end{equation*}

%%%%%%%%%%%%%%%%%%%%%%%%%%%%%%%%%%%%%%%%%%%%%%%%%%%%%%%%%%%%%%%%%%%%%%%%%%%%%%%%%%%%%%%%%%%%%%%%%%%%

\subsection{Explicit probability generating function solution}

To derive the explicit solution, we consider the following backward equations \cite{KimmelAxelrod}:
\begin{equation}\label{eq:backward_A}
    \frac{dF_A}{dt}=-\lambda_AF_A+\lambda_Ah_A(F_A,F_B)
\end{equation}
\begin{equation}\label{eq:backward_B}
    \frac{dF_B}{dt}=-\lambda_BF_B+\lambda_Bh_B(F_A,F_B)
\end{equation}
To solve this system of backward equations, we will solve Equation (\ref{eq:backward_B}) for $F_B$ first and then substitute it into Equation (\ref{eq:backward_A}) to obtain $F_A$. The approach that we employ is largely inspired by \cite{Antal2010,Antal2011}, which also concerned closed-form expressions for the PGFs of two-type time continuous branching processes with exponentially distributed  lifetimes, but with different dynamics from ours. A two-stage model of carcinogenesis, which involved three cell types, was addressed in \cite{Denes} by reducing the partial differential equation satisfied by the PGF to the hypergeometric differential equation of Gauss.

%%%%%%%%%%%%%%%%%%%%%%%%%%%%%%%%%%%%%%%%%%%%%%%%%%%%%%%

\begin{theorem}
Given cells of type $A$ and $B$ that proliferate according to dynamics (\ref{dynamics}), the joint PGF $F_A(x,y,t)$ under different criticalities is given as follows.

\textbf{(a)} (\textbf{Bi-critical}) Let $\mu=\frac{4\lambda_A}{\lambda_B}$.
\begin{align}\label{FA_theorem1}
    F_A(w)=\frac{-4}{\mu}\bigg[&\frac{\mu}{2\sqrt{w}}\frac{\big(\frac{-\theta}{\mu\sqrt{w}}I_0^-(w)+I_1^-(w)\big)+C\big(\frac{\theta}{\mu\sqrt{w}}I_0^+(w)+I_1^+(w)\big)}{I_0^-(w)+CI_0^+(w)}\nonumber\\
    &+\frac{1}{2w}-\frac{\mu(1+w)}{4w}\bigg]
\end{align}
where $w=\frac{\lambda_Bt(1-y)/4+1}{1-y}$, $\theta=\sqrt{1-\mu}$ is a constant defined in terms of the model parameters, and $C=C(x)$ is a constant that is determined by the initial condition. Furthermore,
\begin{align}\label{I_theorem1}
    &I_0^-(w)=I(-\theta,\mu\sqrt{w});\ I_0^+(w)=I(\theta,\mu\sqrt{w});\nonumber\\
    &I_1^-(w)=I(-\theta+1,\mu\sqrt{w});\ I_1^+(w)=I(\theta+1,\mu\sqrt{w})
\end{align}

%%%%%%%%%%%%%%%%%%%%%%%%%%%%%%%%%%%%%%%%%%%%%%%%%%%%%%%

\textbf{(b)} (\textbf{Non-critical A-cells and critical B-cells}) Let $\mu=\frac{4\lambda_A}{\lambda_B}$. 
\begin{align}\label{FA_theorem2}
    F_A(w)=\frac{-1}{\mu(1-\alpha)^2}\bigg[&\frac{\theta}{2}-\frac{\theta_1}{w}+\frac{1}{w}\frac{\big(\frac{1}{2}+\theta_1+\theta_2\big)M_1(w)-CW_1(w)}{M_0(w)+CW_0(w)}\nonumber\\
    &-\frac{\mu(w+2\alpha(1-\alpha)(1-w))}{2w}\bigg]
\end{align}
where $w=\frac{\lambda_Bt(1-y)/4+1}{1-y}$, and $\theta_1$, $\theta_2$ and $\theta$ are constants defined in terms of the model parameters
\begin{equation*}
    \theta_1=-\frac{\mu(1-\alpha)\alpha}{|1-2\alpha|};\ \theta_2=\frac{1}{2}\sqrt{1-4\mu\alpha+4\mu\alpha^2};\ \theta=|1-2\alpha|\mu
\end{equation*}
$C$ is a constant that is determined by the initial condition, and
\begin{align}\label{MW_theorem2}
    M_0(w)=M(\theta_1,\theta_2,\theta w);\quad M_1(w)=M(\theta_1+1,\theta_2,\theta w);\nonumber\\
    W_0(w)=W(\theta_1,\theta_2,\theta w);\quad W_1(w)=W(\theta_1+1,\theta_2,\theta w)
\end{align}

%%%%%%%%%%%%%%%%%%%%%%%%%%%%%%%%%%%%%%%%%%%%%%%%%%%%%%%

\textbf{(c)} (\textbf{Non-critical B-cells}) Let $\mu=\frac{\lambda_A}{\lambda_B(p-q)}$.
\begin{align}\label{FA_theorem4}
    F_A(z)=\frac{-z}{\mu(1-\alpha)^2}\bigg\{&\bigg[\frac{1+\theta_1}{2}z^{-\frac{1-\theta_1}{2}}F_0^+(z)+\frac{(1+\theta_1+\theta_2)^2-\theta_3^2}{4q^2(1+\theta_1)}z^{\frac{1+\theta_1}{2}}F_1^+(z)\nonumber\\
    &+C\bigg(\frac{1-\theta_1}{2}z^{-\frac{1+\theta_1}{2}}F_0^-(z)+\frac{(1-\theta_1+\theta_2)^2-\theta_3^2}{4q^2(1-\theta_1)}z^{\frac{1-\theta_1}{2}}F_1^-(z)\bigg)\bigg]\nonumber\\
    &\bigg/\bigg[z^{\frac{1+\theta_1}{2}}F_0^+(z)+Cz^{\frac{1-\theta_1}{2}}F_0^-(z)\bigg]+\frac{1}{2}\frac{1+\theta_2}{z-q^2}\nonumber\\
    &-\frac{1}{2}\bigg[\frac{1}{z}-\frac{\mu}{z}\bigg(2\alpha(1-\alpha)\frac{z-p^2}{z-q^2}-1\bigg)\bigg]\bigg\}
\end{align}
where $z=\frac{p^2-yq^2}{1-y}e^{\lambda_B(p-q)t}$, and $\theta_1$, $\theta_2$ and $\theta_3$  are constants in terms of the model parameters
\begin{equation*}
    \theta_1=\frac{\mu\sqrt{q^2-4\alpha(1-\alpha)p^2}}{q};\ \theta_2=\frac{\sqrt{q^2-4\alpha(1-\alpha)\mu(p-q)}}{q};\ \theta_3=\mu|1-2\alpha|
\end{equation*}
$C$ is a constant that is determined by the initial condition, and
\begin{align}\label{F01_theorem4}
    &F_0^+(z)=\hypgeo{2}{1}\bigg(\frac{1}{2}(1+\theta_1+\theta_2+\theta_3),\frac{1}{2}(1+\theta_1+\theta_2-\theta_3),1+\theta_1,\frac{z}{q^2}\bigg);\nonumber\\
    &F_0^-(z)=\hypgeo{2}{1}\bigg(\frac{1}{2}(1-\theta_1+\theta_2+\theta_3),\frac{1}{2}(1-\theta_1+\theta_2-\theta_3),1-\theta_1,\frac{z}{q^2}\bigg);\nonumber\\
    &F_1^+(z)=\hypgeo{2}{1}\bigg(1+\frac{1}{2}(1+\theta_1+\theta_2+\theta_3),1+\frac{1}{2}(1+\theta_1+\theta_2-\theta_3),2+\theta_1,\frac{z}{q^2}\bigg);\nonumber\\
    &F_1^-(z)=\hypgeo{2}{1}\bigg(1+\frac{1}{2}(1-\theta_1+\theta_2+\theta_3),1+\frac{1}{2}(1-\theta_1+\theta_2-\theta_3),2-\theta_1,\frac{z}{q^2}\bigg)
\end{align}
\end{theorem}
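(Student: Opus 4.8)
\emph{Proof idea.} The plan is to exploit the triangular structure of the backward system (\ref{eq:backward_A})--(\ref{eq:backward_B}). Since $B$-cells beget only $B$-cells, $h_B$ does not involve $F_A$, so Equation (\ref{eq:backward_B}) is a scalar Riccati equation in $F_B$ alone; with $h_B(F_A,F_B)=(p+qF_B)^2$ its right-hand side $\lambda_B\big[(p+qF_B)^2-F_B\big]$ is a quadratic in $F_B$ with roots $F_B=1$ and $F_B=p^2/q^2$. When $p\neq q$ these are distinct and partial-fraction integration yields $F_B$ in closed form as a rational function of the variable $z=\frac{p^2-yq^2}{1-y}e^{\lambda_B(p-q)t}$ that appears in (\ref{FA_theorem4}); when $p=q=1/2$ the roots coalesce, $\dot F_B=\tfrac{\lambda_B}{4}(1-F_B)^2$, and one integrates to the rational solution $F_B=1-1/w$ with $w$ as in (\ref{FA_theorem1}). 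In every case the constant of integration is fixed by $F_B(x,y,0)=y$.

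With $F_B(t)$ known, Equation (\ref{eq:backward_A}) becomes a Riccati equation in $F_A$ with time-dependent coefficients, $\dot F_A=\lambda_A(1-\alpha)^2F_A^2+\big(2\lambda_A\alpha(1-\alpha)F_B(t)-\lambda_A\big)F_A+\lambda_A\alpha^2F_B(t)^2$. I would linearize it by a substitution of the form $F_A=-\dot v/\big(\lambda_A(1-\alpha)^2v\big)$ --- legitimate because the leading coefficient $\lambda_A(1-\alpha)^2$ is constant in $t$ --- which turns it into the second-order linear ODE $\ddot v-\big(2\lambda_A\alpha(1-\alpha)F_B(t)-\lambda_A\big)\dot v+\lambda_A^2\alpha^2(1-\alpha)^2F_B(t)^2\,v=0$. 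Then I would change the independent variable from $t$ to the variable in which $F_B$ is rational ($w$ in cases (a),(b); $z$ in case (c)) and strip off an elementary exponential/power gauge factor, $v=(\text{elementary})\cdot\tilde v$, to remove the first-derivative term. In the bi-critical case this leaves Bessel's equation in argument $\mu\sqrt w$ of order $\theta=\sqrt{1-\mu}$; in the non-critical-$A$/critical-$B$ case it leaves Whittaker's equation with parameters $(\theta_1,\theta_2)$; and in the non-critical-$B$ case the transformed equation has three regular singular points ($z=0,\ q^2,\ \infty$), hence it is the hypergeometric equation of Gauss, giving the ${}_2F_1$ solutions $F_0^\pm,F_1^\pm$ of (\ref{F01_theorem4}). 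Since only the ratio of two independent solutions survives in the logarithmic derivative, the two constants of the general solution collapse to the single constant $C$.

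Finally I would read off $F_A=-\dot v/\big(\lambda_A(1-\alpha)^2v\big)$, using the derivative formulas (\ref{Bessel_derivative}), (\ref{Whittaker_derivative}), (\ref{Hypergeometric_derivative}) to express $\dot v$ through the index-shifted functions ($I_1^\pm$, $M_1$, $W_1$, $F_1^\pm$) and the identities (\ref{Bessel_recursive}), (\ref{Whittaker_Hypergeometric}) to compress the result into the displayed forms (\ref{FA_theorem1}), (\ref{FA_theorem2}), (\ref{FA_theorem4}); the stray rational terms such as $\tfrac1{2w}-\tfrac{\mu(1+w)}{4w}$ are precisely the contributions of the gauge prefactor together with the $dw/dt$ (resp. $dz/dt$) Jacobian. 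The constant $C=C(x)$ is then determined by enforcing $F_A=x$ at $t=0$, i.e. at $w=1/(1-y)$ resp. $z=(p^2-yq^2)/(1-y)$.

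The main obstacle I anticipate is the normalization step: guessing the exact gauge transformation and rescaling of $w$ or $z$ that puts the variable-coefficient linear ODE into textbook Bessel/Whittaker/hypergeometric form, and then matching parameters --- this is where the somewhat opaque constants $\theta_1,\theta_2,\theta_3$ (and the absolute values $|1-2\alpha|$, reflecting a branch choice as $\alpha$ crosses $1/2$) originate, and where the degenerate boundaries between the three regimes ($\alpha=1/2$ inside (b), $p=q$ inside (c)) must be excised or recovered by continuity. A secondary, purely computational hazard is sign- and bookkeeping-control in the last simplification, where the chain rule, the gauge prefactor, and the special-function derivative identities all have to be combined without error to land on the compact expressions stated.
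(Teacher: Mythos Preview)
Your proposal is correct and follows essentially the same route as the paper: solve the scalar Riccati for $F_B$ first, then linearize the Riccati for $F_A$ via a logarithmic-derivative substitution, change to the natural variable ($w$ or $z$) in which $F_B$ is rational, and strip a gauge factor to reach the canonical Bessel\slash Whittaker\slash hypergeometric equation before reversing the transformations and fixing $C$ from $F_A|_{t=0}=x$. The only cosmetic difference is ordering---the paper changes variable to $w$ (via $f=F_B$, then $w=1/(1-f)$) \emph{before} linearizing, whereas you linearize in $t$ and then change variable---but the resulting second-order ODE and its reduction are the same, and your identification of the three regular singular points $z=0,\,q^2,\,\infty$ in case~(c) and of the origin of the ``stray'' rational terms matches the paper's computation exactly.
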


%%%%%%%%%%%%%%%%%%%%%%%%%%%%%%%%%%%%%%%%%%%%%%%%%%%%%%%

\begin{proof}
\textbf{(a)} Under bi-criticality, we obtain the following backward equation for $B$-cells from (\ref{eq:progeny_B}) and (\ref{eq:backward_B})
\begin{equation*}
    \frac{dF_B}{dt}=\frac{\lambda_B}{4}(F_B-1)^2\nonumber
\end{equation*}
Noting that the initial condition is $F_B(x,y,0)=y$, we have
\begin{equation}\label{B_theorem1}
    F_B(x,y,t)=\frac{\lambda_Bt(1-y)/2+2y}{\lambda_Bt(1-y)/2+2}
\end{equation}
Combining (\ref{eq:progeny_A}), (\ref{eq:backward_A}) and (\ref{B_theorem1}) gives us
\begin{equation*}
    \frac{dF_A}{dt}=-\lambda_AF_A+\lambda_A\bigg((1-\alpha)F_A+\alpha\frac{\lambda_Bt(1-y)/2+2y}{\lambda_Bt(1-y)/2+2}\bigg)^2\nonumber
\end{equation*}
We then employ the transformation $f=F_B=\frac{\lambda_Bt(1-y)/2+2y}{\lambda_Bt(1-y)/2+2}$, which leads to
\begin{equation*}
    \frac{dF_A}{df}=\mu\frac{(1-\alpha)^2}{(1-f)^2}F_A^2+\mu\frac{2\alpha(1-\alpha)f-1}{(1-f)^2}F_A+\mu\alpha^2\frac{f^2}{(1-f)^2}
\end{equation*}
where $\mu=\frac{4\lambda_A}{\lambda_B}$ is a constant. We employ another transformation, $w=\frac{1}{1-f}$, to simplify the equation. Note that $\frac{dw}{df}=\frac{1}{(1-f)^2}=w^2$. Therefore, the equation becomes
\begin{equation*}
    \frac{dF_A}{dw}=\mu(1-\alpha)^2F_A^2+\mu\bigg(2\alpha(1-\alpha)\frac{w-1}{w}-1\bigg)F_A+\mu\alpha^2\bigg(\frac{w-1}{w}\bigg)^2
\end{equation*}
We have that $w(t=0)=\frac{1}{1-y}$. Hence, the initial condition becomes $F_A\Big(w=\frac{1}{1-y}\Big)=x$. This is a non-linear Riccati differential equation, which is not possible to solve in general. However, in this case we can convert it to a second-order linear ODE (the Sturm-Liouville equation) and try to solve it instead \cite{Bender}. To do this, we introduce the following short-hand notations
\begin{equation*}
    A=\mu(1-\alpha)^2;\ B=\mu\bigg(2\alpha(1-\alpha)\frac{w-1}{w}-1\bigg);\ C=\mu\alpha^2\bigg(\frac{w-1}{w}\bigg)^2
\end{equation*}
Following \cite{Antal2011}, we perform the transformation $F_A=\frac{-u'}{Au}=-\frac{1}{A}(\log u)'$. The equation becomes
\begin{equation*}
    u''+\gamma u'+\beta u=0
\end{equation*}
where the coefficients $\gamma$ and $\beta$ are given by
\begin{equation*}
    \gamma=-\bigg(\frac{A'}{A}+B\bigg)=\mu\bigg(1+2\alpha(1-\alpha)\frac{1-w}{w}\bigg);\ \beta=AC=\mu^2\alpha^2(1-\alpha)^2\bigg(\frac{1-w}{w}\bigg)^2
\end{equation*}
To remove the first derivative, we use another quasi-linear transformation $u=\Phi T$ such that $\Phi'=-\gamma\Phi/2$ \cite{Antal2011}. As we shall see, we do not need to compute the function $\Phi$ explicitly. Finally, we arrive at an equation whose solution can be obtained explicitly
\begin{equation}\label{T_second_ODE}
    T''-\bigg(\frac{\mu^2(w+4\alpha(1-\alpha)(1-w))}{4w}-\frac{\mu\alpha(1-\alpha)}{w^2}\bigg)T=0
\end{equation}
Since we are assuming that cells of type $A$ behave critically, the equation above reduces to
\begin{equation*}
    T''-\frac{\mu}{4w}\Big(\mu-\frac{1}{w}\Big)T=0
\end{equation*}
The closed-form solution for the ODE above can be written as a linear combination of modified-Bessel functions of the first kind. The solution, up to a multiplicative constant, is given by
\begin{equation*}
    T(w)=\sqrt{w}[I(-\sqrt{1-\mu},\mu\sqrt{w})+CI(\sqrt{1-\mu},\mu\sqrt{w})]
\end{equation*}
where $I(.)$ is the modified Bessel function defined in Section 2. Using the short-hand notations (\ref{I_theorem1}), we can write $T(w)=\sqrt{w}[I_0^-(w)+CI_0^+(w)]$. To obtain an explicit expression for $F_A$, we reverse the calculations as follows
\begin{equation}\label{reverse}
    F_A(w)=\frac{-1}{A}\bigg(\frac{T'(w)}{T(w)}-\frac{\gamma}{2}\bigg)
\end{equation}
As we can see, we are interested in the ratio between $T'(w)$ and $T(w)$ and, therefore, the omitted multiplicative constant does not matter. Using formulas (\ref{Bessel_derivative}) and (\ref{Bessel_recursive}), and short-hand notations (\ref{I_theorem1}), we arrive at the following expression for $T'(w)$
\begin{equation*}
    T'(w)=\frac{\mu}{2}\bigg[\bigg(\frac{-\theta}{\mu\sqrt{w}}I_0^-(w)+I_1^-(w)\bigg)+C\bigg(\frac{\theta}{\mu\sqrt{w}}I_0^+(w)+I_1^+(w)\bigg)\bigg]+\frac{1}{2w}T(w)
\end{equation*}
where $\theta=\sqrt{1-\mu}$. From (\ref{reverse}), given that $A=\frac{\mu}{4}$ and $\gamma=\frac{\mu(1+w)}{2w}$ in the critical case, we find that $F_A(w)$ is given by (\ref{FA_theorem1}), where $w=\frac{1}{1-f}=\frac{\lambda_Bt(1-y)/4+1}{1-y}$ is a function of $t$. The initial condition $F_A\big(w=\frac{1}{1-y}\big)=x$ leads to the following expression for the constant $C$.
\begin{equation}\label{C_theorem1}
    C=\frac{\kappa I_0^-\big(\frac{1}{1-y}\big)-\big(\frac{-\theta\sqrt{1-y}}{\mu}I_0^-\big(\frac{1}{1-y}\big)+I_1^-\big(\frac{1}{1-y}\big)\big)}{\big(\frac{\theta\sqrt{1-y}}{\mu}I_0^+\big(\frac{1}{1-y}\big)+I_1^+\big(\frac{1}{1-y}\big)\big)-\kappa I_0^+\big(\frac{1}{1-y}\big)}
\end{equation}
where
\begin{equation}\label{kappa_theorem1}
    \kappa=\frac{2-y-x}{2\sqrt{1-y}}-\frac{\sqrt{1-y}}{\mu}
\end{equation}
\end{proof}

\noindent Proofs of the remaining parts follow the same approach, and are given in details in the Supplement.

%%%%%%%%%%%%%%%%%%%%%%%%%%%%%%%%%%%%%%%%%%%%%%%%%%%%%%%%%%%%%%%%%%%%%%%%%%%%%%%%%%%%%%%%%%%%%%%%%%%%

\subsection{Asymptotic analysis}

In this section, we will study the asymptotic properties of the systems we consider, with particular focus on the probability of extinction of cells in large time.

%%%%%%%%%%%%%%%%%%%%%%%%%%%%%%%%%%%%%%%%%%%%%%%%%%%%%%%

\begin{theorem}
Suppose that cells of type $A$ and $B$ proliferate according to dynamics (\ref{dynamics}).

\textbf{(a)} (\textbf{Bi-critical}) Eventual extinction occurs with probability $1$. Furthermore, the rate at which the population approaches extinction is proportional to the square root of time.

\textbf{(b)} (\textbf{Non-critical A cells and critical B cells}) Eventual extinction happens with probability $\frac{\alpha^2}{(1-\alpha)^2}$ in the super-critical case, and with probability $1$ in the sub-critical case. In both cases, the rate at which the probability of extinction approaches the limit is proportional to time.

\textbf{(c)} (\textbf{Super-critical B cells}) Assuming $\theta_1\neq-1$, where $\theta_1$ is defined in part (c) of Theorem 1, eventual extinction happens with probability $\frac{1}{2(1-\alpha)^2}\Big[1-\frac{2\alpha(1-\alpha)p^2}{q^2}-\frac{\sqrt{q^2-4\alpha(1-\alpha)p^2}}{q}\Big]$. The probability of extinction approaches this limit exponentially with respect to time.

\end{theorem}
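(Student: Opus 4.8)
The plan is to read each eventual-extinction probability off the closed forms of Theorem~1 as $\lim_{t\to\infty}F_A(0,0,t)$; this limit is indeed the probability of eventual extinction because the empty configuration is absorbing, so $\{Z_A(t)=Z_B(t)=0\}$ is an increasing family of events and $F_A(0,0,t)$ is non-decreasing in $t$. After setting $x=y=0$, one notes the behaviour of the transformed variable: in parts (a) and (b), $w=\lambda_Bt/4+1\to\infty$ linearly in $t$; in part (c), since super-criticality of the $B$-cells forces $p>q$, the variable $z=p^2e^{\lambda_B(p-q)t}\to\infty$ exponentially in $t$. The constant $C$ becomes the fixed number forced by the initial condition $F_A=0$; in each case the only thing needed from it is that it does not make the leading coefficient of the denominator appearing in Theorem~1 vanish — a degenerate subcase that, if it occurs, is handled separately and yields the same limit.

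For part (a) I would substitute $I(a,z)\sim e^z/\sqrt{2\pi z}$ from (\ref{Bessel_asymptotic}) into (\ref{FA_theorem1}). Because the leading term is independent of the order, the $e^{\mu\sqrt w}/\sqrt{2\pi\mu\sqrt w}$ factors in the numerator and denominator of the Bessel ratio cancel and that ratio tends to $1$; then $\frac{\mu}{2\sqrt w}\to0$, $\frac1{2w}\to0$, and $\frac{\mu(1+w)}{4w}\to\frac\mu4$, so $F_A\to\frac{-4}{\mu}(0-\frac\mu4)=1$, with the first correction $1-F_A(0,0,t)=\Theta(w^{-1/2})=\Theta(t^{-1/2})$ giving the square-root rate. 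For part (b) I would convert the Whittaker functions of (\ref{FA_theorem2}) to confluent hypergeometric functions via (\ref{Whittaker_Hypergeometric}) and use (\ref{Hypergeometric_asymptotic}): $M(\theta_1,\theta_2,\theta w)$ and $M(\theta_1{+}1,\theta_2,\theta w)$ both grow like $e^{\theta w/2}(\theta w)^{-\theta_2}$ while $W(\theta_1,\theta_2,\theta w)$ decays like $e^{-\theta w/2}(\theta w)^{\theta_1}$, so only the $M$-terms survive in the ratio, their common growth factor cancels, and the $\Gamma$-recursion gives $M_1(w)/M_0(w)\to\theta_2-\theta_1-\tfrac12$. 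In the limit only $\tfrac\theta2$ and the limit $-\tfrac\mu2(1-2\alpha(1-\alpha))$ of $-\frac{\mu(w+2\alpha(1-\alpha)(1-w))}{2w}$ remain (the $-\theta_1/w$ term and the bounded ratio over $w$ both vanish), so $F_A\to\frac{-1}{2(1-\alpha)^2}(|1-2\alpha|-1+2\alpha(1-\alpha))$, and the elementary identity $|1-2\alpha|-1+2\alpha(1-\alpha)=-2\alpha^2$ for $\alpha<\tfrac12$, $=-2(1-\alpha)^2$ for $\alpha>\tfrac12$, gives $\alpha^2/(1-\alpha)^2$ and $1$ respectively; the discarded terms are $\Theta(1/w)=\Theta(1/t)$. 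As a check, $\alpha^2/(1-\alpha)^2$ is the extinction probability of the embedded Galton--Watson process of $A$-cells, with offspring PGF $((1-\alpha)s+\alpha)^2$, while the critical $B$-cells they feed die out almost surely.

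Part (c) is the crux and I expect the bookkeeping there to be the main obstacle. The four Gauss hypergeometric functions $F_0^\pm,F_1^\pm$ in (\ref{F01_theorem4}) are evaluated at $z/q^2\to\infty$, outside the disk $|z/q^2|<1$ of convergence of (\ref{Hypergeometric_series}), so I would use the standard linear transformation expressing $\hypgeo{2}{1}(a,b,c,\zeta)$ for large $\zeta$ as a combination of $(-\zeta)^{-a}$ and $(-\zeta)^{-b}$ times hypergeometric functions in $1/\zeta$ (each $\to1$). The delicate part is combining the prefactors $z^{\pm(1\pm\theta_1)/2}$ with the exponents $-\tfrac12(1\pm\theta_1+\theta_2\mp\theta_3)$ coming out of the transformation and checking that the numerator and denominator of the large ratio in (\ref{FA_theorem4}) carry the same power of $z$ up to an extra $z^{-1}$ in the numerator — so the ratio is $\Theta(1/z)$, the whole brace tends to $0$ like $1/z$, and $F_A=\frac{-z}{\mu(1-\alpha)^2}\{\cdots\}$ has a finite limit, equal to $\frac{-1}{\mu(1-\alpha)^2}$ times the $\Theta(1)$ coefficient of $z^{-1}$ in the brace. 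Assembling that coefficient means adding the $\Gamma$-factor produced by the hypergeometric ratio to the elementary limits $z\cdot\tfrac12\tfrac{1+\theta_2}{z-q^2}\to\tfrac{1+\theta_2}{2}$, $z\cdot(-\tfrac1{2z})\to-\tfrac12$, and $z\cdot\tfrac{\mu}{2z}(2\alpha(1-\alpha)\tfrac{z-p^2}{z-q^2}-1)\to\tfrac\mu2(2\alpha(1-\alpha)-1)$, and then simplifying with the definitions of $\theta_1,\theta_2,\theta_3$ and $\mu=\frac{\lambda_A}{\lambda_B(p-q)}$, $q=1-p$, until it collapses to $\frac{1}{2(1-\alpha)^2}[1-\frac{2\alpha(1-\alpha)p^2}{q^2}-\frac{\sqrt{q^2-4\alpha(1-\alpha)p^2}}{q}]$. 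The hypothesis $\theta_1\neq-1$ is exactly what keeps the third parameter $1+\theta_1$ of $F_0^+$ and the coefficient $\tfrac{(1+\theta_1+\theta_2)^2-\theta_3^2}{4q^2(1+\theta_1)}$ well defined (genericity of the other $\theta$'s is also tacitly assumed, so that no $\Gamma$-pole nor coincidence of exponents spoils the transformation). The convergence here is exponential in $t$ because the neglected corrections are powers of $1/\zeta=q^2/z=\Theta(e^{-\lambda_B(p-q)t})$.
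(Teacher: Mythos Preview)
Your treatment of parts (a) and (b) matches the paper's: evaluate $F_A(0,0,t)$ from Theorem~1 and send $w\to\infty$, using the Bessel asymptotic for (a) and the Whittaker--Kummer asymptotics for (b). One minor slip in (b): $M_1(w)/M_0(w)$ does not tend to the constant $\theta_2-\theta_1-\tfrac12$ but rather to $(\theta_2-\theta_1-\tfrac12)/(\theta w)$, since the leading asymptotics of $M(\theta_1,\theta_2,\theta w)$ and $M(\theta_1{+}1,\theta_2,\theta w)$ differ by a factor of $(\theta w)^{-1}$. This does not affect your limit or your $\Theta(1/t)$ rate, as the term is already divided by $w$ in (\ref{FA_theorem2}).

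The genuine gap is in part (c). With the progeny generating function $h_B(y)=(p+qy)^2$ used to derive Theorem~1(c), the mean number of $B$-offspring is $2q$, so super-criticality of the $B$-cells means $q>\tfrac12$, i.e.\ $p<q$ (the paper states this explicitly in its proof). Hence $\mu=\lambda_A/(\lambda_B(p-q))<0$ and, at $x=y=0$, the variable $z=p^2e^{\lambda_B(p-q)t}\to 0$, not $\to\infty$. Your plan to invoke the large-argument connection formula for $\hypgeo{2}{1}$ is therefore aimed at the wrong limit (and would in any case hit the branch cut on $[1,\infty)$). The correct argument uses that $\hypgeo{2}{1}(\cdot,\cdot,\cdot,z/q^2)\to 1$ as $z\to 0$ for all four of $F_0^\pm,F_1^\pm$, so only the prefactor powers $z^{(1\pm\theta_1)/2}$ compete; since $\theta_1<0$, the dominant survivors make the big ratio in (\ref{FA_theorem4}) behave like $\tfrac{1+\theta_1}{2}z^{-1}$, while the remaining brace terms have the elementary small-$z$ limits $\tfrac12\tfrac{1+\theta_2}{z-q^2}\to-\tfrac{1+\theta_2}{2q^2}$ and $\tfrac{\mu}{2z}\big(2\alpha(1-\alpha)\tfrac{z-p^2}{z-q^2}-1\big)\sim\tfrac{\mu}{2z}\big(2\alpha(1-\alpha)\tfrac{p^2}{q^2}-1\big)$. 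Multiplying through by $-z/(\mu(1-\alpha)^2)$ and inserting the definition of $\theta_1$ gives the stated extinction probability. The exponential rate comes from the next surviving power of $z$, which is why the cases $-1<\theta_1<0$ and $\theta_1<-1$ must be handled separately and $\theta_1=-1$ excluded.
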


%%%%%%%%%%%%%%%%%%%%%%%%%%%%%%%%%%%%%%%%%%%%%%%%%%%%%%%

\begin{proof}
\textbf{(a)} The probability of extinction of the entire population by time $t$, $E(t)$, is given by $F_A(0,0,t)$. From (\ref{kappa_theorem1}), we have that, at $x=0$ and $y=0$,
\begin{equation*}
    \kappa_0\coloneqq\kappa|_{x=0,y=0}=1-\frac{1}{\mu}
\end{equation*}
Therefore, from (\ref{C_theorem1}),
\begin{equation*}
    C_0\coloneqq C|_{x=0,y=0}=\frac{\kappa_0I_0^-(1)-\big(\frac{-\theta}{\mu}I_0^-(1)+I_1^-(1)\big)}{\big(\frac{\theta}{\mu}I_0^+(1)+I_1^+(1)\big)-\kappa_0I_0^+(1)}
\end{equation*}
is a finite constant. Given that $w=\frac{\lambda_Bt(1-y)/4+1}{1-y}$, we have that $w_0\coloneqq w|_{x=0,y=0}=\frac{\lambda_B}{4}t+1$. Hence, $w_0\to\infty$ as $t\to\infty$. Using the large-argument approximation (\ref{Bessel_asymptotic}) for modified Bessel functions and the fact that $C_0$ is finite, we deduce that
\begin{equation*}
    \frac{\big(\frac{-\theta}{\mu\sqrt{w_0}}I_0^-(w_0)+I_1^-(w_0)\big)+C_0\big(\frac{\theta}{\mu\sqrt{w_0}}I_0^+(w_0)+I_1^+(w_0)\big)}{I_0^-(w_0)+C_0I_0^+(w_0)}\sim 1-\frac{\theta}{\mu\sqrt{w_0}}\frac{1-C_0}{1+C_0}
\end{equation*}
for large time. Therefore, it follows that 
\begin{align*}
    E(t)&=\frac{-4}{\mu}\bigg[\frac{\mu}{2\sqrt{w_0}}\frac{\big(\frac{-\theta}{\mu\sqrt{w_0}}I_0^-(w_0)+I_1^-(w_0)\big)+C_0\big(\frac{\theta}{\mu\sqrt{w_0}}I_0^+(w_0)+I_1^+(w_0)\big)}{I_0^-(w_0)+CI_0^+(w_0)}\nonumber\\
        &\qquad\quad+\frac{1}{2w_0}-\frac{\mu(1+w_0)}{4w_0}\bigg]\nonumber\\
        &\sim\frac{-4}{\mu}\bigg[\frac{\mu}{2\sqrt{w_0}}\bigg(1-\frac{\theta}{\mu\sqrt{w_0}}\frac{1-C_0}{1+C_0}\bigg)+\frac{1}{2w_0}-\frac{\mu(1+w_0)}{4w_0}\bigg]\nonumber\\
        &\sim\frac{-4}{\mu}\bigg(\frac{\mu}{2\sqrt{w_0}}-\frac{\mu}{4}\bigg)\nonumber\\
        &\sim 1-\frac{4/\sqrt{\lambda_B}}{\sqrt{t}}
\end{align*}
The theorem then follows. 
\end{proof}

\noindent Certain extinction of a two-type system with double criticalities has been proven in \cite{Athreya}. Hence, in the bi-critical case, our theorem agrees with established theoretical results in literature using a different approach. Detailed proofs of the remaining parts are given in the Supplement.

%%%%%%%%%%%%%%%%%%%%%%%%%%%%%%%%%%%%%%%%%%%%%%%%%%%%%%%%%%%%%%%%%%%%%%%%%%%%%%%%%%%%%%%%%%%%%%%%%%%%

\section{Discussion}

%%%%%%%%%%%%%%%%%%%%%%%%%%%%%%%%%%%%%%%%%%%%%%%%%%%%%%%%%%%%%%%%%%%%%%%%%%%%%%%%%%%%%%%%%%%%%%%%%%%%

\subsection{Stochastic models of hematopoiesis over time}

The history of stochastic models of hematopoiesis goes back to the work of Till, McCullogh, and Siminovitch in 1963 \cite{Till}, who modeled proliferation of colony-forming units (CFU) and described the distribution of sizes of resulting colonies using gamma distribution and simulated (using an early IBM machine) birth-death processes. Macken and Perelson \cite{Macken}, in a Springer Lecture Notes volume, proposed a series of more comprehensive models of hematopietic system in the form of a multi-type Galton-Watson process. This analysis was insightful, including probabilities of non-extinction (``completion of growth''), growth rates and the first two moments of cell counts. The results were mostly obtained using now classical theorems from Harris's book \cite{Harris}. Many original papers on stochastic models appeared since, including Dingli et al. \cite{Dingli} regarding stochastic clonal expansion of hematopoietic stem cells, Kimmel and Corey \cite{KimmelCorey} and Wojdyla et al. \cite{Wojdyla} regarding evolution of neutropenia-related leukemias, and many others. Some of them were reviewed in \cite{WhitchardBlood,kimmel2014stochasticity}. A paper that features a three-type branching process model not very unlike ours was published by Denes and Krewski \cite{Denes}. 

Diverse aspects of the dynamics of the hematopoietic systems can be understood using deterministic or quasi-stochastic approaches. The former involves expected-value dynamics modeled using ordinary differential equation (ODE) models, while the latter involves expected frequencies modeled using integral equations or partial differential equations of transport type. A comprehensive review of both types of approaches was carried out by Pujo-Menjouet \cite{Pujo}. A recent paper by Dinh et al. \cite{dinh2021predicting} shows how stochasticity is important in prediction of outcomes of treatments for leukemias (for the deterministic model related to the latter paper, see \cite{stiehl2014clonal}).

%%%%%%%%%%%%%%%%%%%%%%%%%%%%%%%%%%%%%%%%%%%%%%%%%%%%%%%%%%%%%%%%%%%%%%%%%%%%%%%%%%%%%%%%%%%%%%%%%%%%

\subsection{Review of biologically relevant cases}

\noindent We will limit the review to the haemopoietic (blood cell production) system in the Human and in laboratory animals such as the Mouse. The $A$-cells here are the haemopoietic stem cells (HST), while the $B$-cells are the committed multipotent progenitors (CMP), which have the ability to feed into more specialized compartments, with probability $p$ for each progeny cell following division. CMP's have self-renewal capacity, with probability $q$ for each progeny cell following division, but they mainly serve the role of transitional cells \cite{WhitchardBlood}. Analogies with other hierarchical cell production systems, such as the neurogenic progenitors in the brain \cite{villalba2020regulation}, urothelial cells in the urinary bladder \cite{wang2017urothelial}, and others, are immediate. However, it should be noted that, except for the neurogenesis, no other cell production system seems to reach the sophistication level of higher mammals haematopoiesis. Please note that we do not consider transients caused by regulatory feedbacks, such as the interferon $\gamma$ feedback in haematopoiesis \cite{hormaechea2021chronic}, which appears when the system is under stress.

\textbf{Normal adult haematopoiesis}: In this case, stabilization of the mean HSC and CMP counts is essential \cite{shepherd2004estimating}. This is only achieved if the $A$-cells are critical ($\alpha=1/2$) and the $B$-cells are sub-critical ($p>q$).  

\textbf{Normal fetal haematopoiesis}: In this case, growth of the mean HSC and CMP counts is essential \cite{Wojdyla}. This type of haematopoiesis occurs in the Human over the last 3 months of fetal development. This may be achieved if the $A$-cells are super-critical ($\alpha>1/2$) and the $B$-cells are sub-critical, critical or super-critical. In each of the 3 cases, the relative proportion of HSC to CMP will be changing differently with time. The probability of extinction in any finite time depends on the type of proliferation of $B$-cells. Inroads into understanding this process have been obtained using mosaic mice methodology \cite{ganuza2017lifelong}.

\textbf{Normal aging haematopoiesis}: In this case, the mean HSC count is decreasing but the CMP count should be prevented from falling too fast \cite{shepherd2004estimating}. This type of haematopoiesis occurs in the Human at age greater than about 70. This may be achieved if the $A$-cells are sub-critical ($\alpha>1/2$) and the $B$-cells are super-critical ($p<q$). 

\textbf{Repopulation following bone marrow transplant}: This case is not very dissimilar from the fetal haematopoiesis, as the main purpose is growth of the mean HSC and CMP counts. An interesting twist is that the transplant has to include a mix of some HSC and much more CMP, since transplanting HSC alone will not allow building up of haematopoiesis in a short time; growth of the mean HSC and CMP counts is essential \cite{stiehl2014impact}. Our model allows ballpark computations of this mix, given the minimum count of CMPs to be present after a fixed time from the transplant to prevent subject's death. 

\textbf{Cyclic neutropenias}: Mathematically interesting cases coincide frequently with biological reality. For example, in the bi-critical case, we expect large oscillations, observed in diseases such as cyclic neutropenias \cite{olofsen2020modeling}. In the historic paper by Mackey and Glass \cite{mackey1977oscillation}, cyclic hematopoiesis is attributed to defects in feedback governing stem cell activation, but the strong stochastic component may be better explained by criticality.

%%%%%%%%%%%%%%%%%%%%%%%%%%%%%%%%%%%%%%%%%%%%%%%%%%%%%%%%%%%%%%%%%%%%%%%%%%%%%%%%%%%%%%%%%%%%%%%%%%%%
\subsection{Distributions of cell lifetimes}

Methods, which we used to obtain our theorems, are typical for a linear birth-and-death process \cite{grimmett2020probability}. As it is known, the latter is mathematically identical to a binary fission branching process with exponential lifetimes, which traditionally is considered not appropriate for modeling cell populations. Indeed, in laboratory cell populations, the sojourn times in different cell cycle phases are usually of finite duration, and in some bacteria, cell divisions are almost synchronous, as attested by a voluminous literature of the subject (e.g., Kimmel and Axelrod's monograph \cite{KimmelAxelrod}). However, the situation is different in cancer and physiological control systems. For example, in lung cancer, time to doubling of cell populations is of the order of several weeks \cite{gorlova2005estimating}. In haematopoietic stem cells, it may be even longer \cite{catlin2011replication}. Much of this time is waiting for ``permission'' to divide (in healthy cells) or finishing of multiple rounds of DNA repair (in cancer cells). This part of the cell cycle is likely to be distributed exponentially, although measurements in vivo are difficult. In any case, models of physiological stem cell systems based on exponentially distributed cell lifetimes (hence expected values are described by ordinary differential equations) seem to work with considerable precision. This was demonstrated among others by measurements and mathematical models based on shortening of telomeres in haematopoietic stem cells \cite{sidorov2009leukocyte}.

%%%%%%%%%%%%%%%%%%%%%%%%%%%%%%%%%%%%%%%%%%%%%%%%%%%%%%%%%%%%%%%%%%%%%%%%%%%%%%%%%%%%%%%%%%%%%%%%%%%%

\section{Conclusions}

Two- and multi-type branching processes are minimalist models for normal cell production systems and cancer progression in which altered clones play a major role. We presented closed-form solutions for biologically meaningful two-type time continuous branching processes with exponentially distributed  lifetimes. We computed the probability generating functions explicitly for all possible combinations of cell-type-specific criticalities. The most practically interesting case, which arises in the dynamics of stem cells, is when the self-renewing stem cells are critical and the commited cells are sub-critical. However, as discussed previously, other system dynamics are also relevant in certain situations. While systems with non-critical dynamics are complex due to the presence of the confluent hypergeometric and Whittaker functions, solution for the bi-critical case can be expressed in terms of the relatively simple modified Bessel functions with well-defined asymptotic behaviors. Our asymptotic analyses not only reveal the large-time probability of extinction, but also describe the rate at which this probability approaches the limit. Given the closed-form solutions, other interesting asymptotic properties can be studied by considering certain scaling limits, and require further research in the future. It is also more realistic to model cancer progression using a multi-type branching process. Given the already complex nature of the explicit solutions presented in our paper, doing the same for a multi-type system may not be feasible (however, see \cite{Denes}) and require a different approach. 

%%%%%%%%%%%%%%%%%%%%%%%%%%%%%%%%%%%%%%%%%%%%%%%%%%%%%%%%%%%%%%%%%%%%%%%%%%%%%%%%%%%%%%%%%%%%%%%%%%%%

\small

\section*{Acknowledgments}

Nam Nguyen acknowledges funding from CPRIT grant RP200383 (Dr. Wenyi Wang, PI). Marek Kimmel acknowledges funding from NIH R01HL136333 and R01HL134880 grants (Dr. Katherine King, PI).

%%%%%%%%%%%%%%%%%%%%%%%%%%%%%%%%%%%%%%%%%%%%%%%%%%%%%%%%%%%%%%%%%%%%%%%%%%%%%%%%%%%%%%%%%%%%%%%%%%%%

\bibliographystyle{plain}
\bibliography{references}

\begin{thebibliography}{10}

\bibitem{Abramowitz}
Milton Abramowitz and Irene~A. Stegun.
\newblock {\em Handbook of Mathematical Functions with Formulas, Graphs, and
  Mathematical Tables}.
\newblock Dover, New York, ninth edition, 1964.

\bibitem{Antal2010}
Tibor Antal and Pavel Krapivsky.
\newblock Exact solution of a two-type branching process: Clone size
  distribution in cell division kinetics.
\newblock {\em J Stat Mech}, 2009.

\bibitem{Antal2011}
Tibor Antal and Pavel Krapivsky.
\newblock Exact solution of a two-type branching process: Models of tumor
  progression.
\newblock {\em J Statist Mech Theory Exp}, 8, 2011.

\bibitem{Athreya}
K.B. Athreya, P.E. Ney, and P.E. Ney.
\newblock {\em Branching Processes}.
\newblock Dover Books on Mathematics. Dover Publications, 2004.

\bibitem{Bender}
Carl Bender and Steven Orszag.
\newblock {\em Advanced Mathematical Methods for Scientists and Engineers:
  Asymptotic Methods and Perturbation Theory}, volume~1.
\newblock Springer, New York, NY, 1999.

\bibitem{catlin2011replication}
Sandra~N Catlin, Lambert Busque, Rosemary~E Gale, Peter Guttorp, and Janis~L
  Abkowitz.
\newblock The replication rate of human hematopoietic stem cells in vivo.
\newblock {\em Blood, The Journal of the American Society of Hematology},
  117(17):4460--4466, 2011.

\bibitem{Denes}
Josef Denes and Daniel Krewski.
\newblock An exact representation for the generating function for the
  moolgavkar-venzon-knudson two-stage model of carcinogenesis with stochastic
  stem cell growth.
\newblock {\em Mathematical Biosciences}, 131(2):185--204, 1996.

\bibitem{Dingli}
David Dingli, Arne Traulsen, and Jorge~M. Pacheco.
\newblock Stochastic dynamics of hematopoietic tumor stem cells.
\newblock {\em Cell Cycle}, 6:461--466, 2007.

\bibitem{dinh2021predicting}
Khanh~N Dinh, Roman Jaksik, Seth~J Corey, and Marek Kimmel.
\newblock Predicting time to relapse in acute myeloid leukemia through
  stochastic modeling of minimal residual disease based on clonality data.
\newblock {\em Computational and Systems Oncology}, 1(3):e1026, 2021.

\bibitem{ganuza2017lifelong}
Miguel Ganuza, Trent Hall, David Finkelstein, Ashley Chabot, Guolian Kang, and
  Shannon McKinney-Freeman.
\newblock Lifelong haematopoiesis is established by hundreds of precursors
  throughout mammalian ontogeny.
\newblock {\em Nature cell biology}, 19(10):1153--1163, 2017.

\bibitem{gorlova2005estimating}
Olga Gorlova, Bo~Peng, David Yankelevitz, Claudia Henschke, and Marek Kimmel.
\newblock Estimating the growth rates of primary lung tumours from samples with
  missing measurements.
\newblock {\em Statistics in medicine}, 24(7):1117--1134, 2005.

\bibitem{Gradshteyn}
I.~S. Gradshteyn and I.~M. Ryzhik.
\newblock {\em Table of integrals, series, and products}.
\newblock Elsevier/Academic Press, Amsterdam, seventh edition, 2007.
\newblock Translated from the Russian, Translation edited and with a preface by
  Alan Jeffrey and Daniel Zwillinger, With one CD-ROM (Windows, Macintosh and
  UNIX).

\bibitem{grimmett2020probability}
G.~Grimmett and D.~Stirzaker.
\newblock {\em Probability and random processes}.
\newblock Oxford university press, 2020.

\bibitem{Harris}
T.~E. Harris.
\newblock {Branching Processes}.
\newblock {\em The Annals of Mathematical Statistics}, 19(4):474--494, 1948.

\bibitem{hormaechea2021chronic}
Daniel Hormaechea-Agulla, Katie~A Matatall, Duy~T Le, Bailee Kain, Xiaochen
  Long, Pawel Kus, Roman Jaksik, Grant~A Challen, Marek Kimmel, and Katherine~Y
  King.
\newblock Chronic infection drives dnmt3a-loss-of-function clonal hematopoiesis
  via ifn$\gamma$ signaling.
\newblock {\em Cell Stem Cell}, 2021.

\bibitem{kimmel2014stochasticity}
Marek Kimmel.
\newblock Stochasticity and determinism in models of hematopoiesis.
\newblock {\em In: A systems biology approach to blood; Corey, Kimmel and
  Leonard, eds.}, pages 119--152, 2014.

\bibitem{KimmelAxelrod}
Marek Kimmel and David Axelrod.
\newblock {\em Branching Processes in Biology}, volume~19.
\newblock Springer, New York, NY, 2002.

\bibitem{KimmelCorey}
Marek Kimmel and Seth Corey.
\newblock Stochastic hypothesis of transition from inborn neutropenia to aml:
  Interactions of cell population dynamics and population genetics.
\newblock {\em Frontiers in Oncology}, 3:89, 2013.

\bibitem{Macken}
C.A. Macken and A.S. Perelson.
\newblock {\em Stem Cell Proliferation and Differentiation: A Multitype
  Branching Process Model}.
\newblock Lecture Notes in Biomathematics. Springer Berlin Heidelberg, 1988.

\bibitem{mackey1977oscillation}
Michael~C Mackey and Leon Glass.
\newblock Oscillation and chaos in physiological control systems.
\newblock {\em Science}, 197(4300):287--289, 1977.

\bibitem{olofsen2020modeling}
Patricia~A Olofsen and Ivo~P Touw.
\newblock Modeling severe congenital neutropenia in induced pluripotent stem
  cells.
\newblock In {\em Recent Advances in iPSC Disease Modeling, Volume 1}, pages
  85--101. Elsevier, 2020.

\bibitem{Pujo}
Laurent Pujo-Menjouet.
\newblock Blood cell dynamics: Half of a century of modelling.
\newblock {\em Mathematical Modelling of Natural Phenomena}, 11:92--115, 2016.

\bibitem{shepherd2004estimating}
Bryan~E Shepherd, Peter Guttorp, Peter~M Lansdorp, and Janis~L Abkowitz.
\newblock Estimating human hematopoietic stem cell kinetics using granulocyte
  telomere lengths.
\newblock {\em Experimental hematology}, 32(11):1040--1050, 2004.

\bibitem{sidorov2009leukocyte}
Igor Sidorov, Masayuki Kimura, Anatoli Yashin, and Abraham Aviv.
\newblock Leukocyte telomere dynamics and human hematopoietic stem cell
  kinetics during somatic growth.
\newblock {\em Experimental hematology}, 37(4):514--524, 2009.

\bibitem{stiehl2014impact}
T~Stiehl, AD~Ho, and A~Marciniak-Czochra.
\newblock The impact of cd34+ cell dose on engraftment after scts: personalized
  estimates based on mathematical modeling.
\newblock {\em Bone marrow transplantation}, 49(1):30--37, 2014.

\bibitem{stiehl2014clonal}
Thomas Stiehl, Natalia Baran, Anthony~D Ho, and Anna Marciniak-Czochra.
\newblock Clonal selection and therapy resistance in acute leukaemias:
  mathematical modelling explains different proliferation patterns at diagnosis
  and relapse.
\newblock {\em Journal of The Royal Society Interface}, 11(94):20140079, 2014.

\bibitem{Till}
James~E. Till, Ernest~A. McCulloch, and Louis Siminovitch.
\newblock A stochastic model of stem cell proliferation, based on the growth of
  spleen colony-forming cells.
\newblock {\em Proceedings of the National Academy of Sciences of the United
  States of America}, 51:29--36, 1964.

\bibitem{villalba2020regulation}
Ana Villalba, Magdalena G{\"o}tz, and V{\'\i}ctor Borrell.
\newblock The regulation of cortical neurogenesis.
\newblock {\em Current topics in developmental biology}, 142:1--66, 2020.

\bibitem{wang2017urothelial}
Caihong Wang, Whitney~Trotter Ross, and Indira~U Mysorekar.
\newblock Urothelial generation and regeneration in development, injury, and
  cancer.
\newblock {\em Developmental Dynamics}, 246(4):336--343, 2017.

\bibitem{WhitchardBlood}
Zakary~L. Whichard, Casim~A. Sarkar, Marek Kimmel, and Seth~J. Corey.
\newblock Hematopoiesis and its disorders: a systems biology approach.
\newblock {\em Blood}, 115(12):2339--2347, 2010.

\bibitem{Wojdyla}
Tomasz Wojdyla, Hrishikesh Mehta, Taly Glaubach, Roberto Bertolusso, Marta
  Iwanaszko, Rosemary Braun, Seth~J. Corey, and Marek Kimmel.
\newblock Mutation, drift and selection in single-driver hematologic
  malignancy: Example of secondary myelodysplastic syndrome following treatment
  of inherited neutropenia.
\newblock {\em PLOS Computational Biology}, 15(1):1--22, 2019.

\end{thebibliography}

%%%%%%%%%%%%%%%%%%%%%%%%%%%%%%%%%%%%%%%%%%%%%%%%%%%%%%%%%%%%%%%%%%%%%%%%%%%%%%%%%%%%%%%%%%%%%%%%%%%%

\newpage

\singlespacing

\begin{center}
    {\Large\bf Supplementary Materials for}\\
    \vspace{3mm}
    {\large``Stochastic Models of Stem Cells and Their Descendants under Different Criticality Assumptions''}
\end{center}
 
\begin{center}
    \large Nam H Nguyen and Marek Kimmel
\end{center}
\begin{center}
    \large Department of Statistics, Rice University, Houston, TX, USA
\end{center}

%%%%%%%%%%%%%%%%%%%%%%%%%%%%%%%%%%%%%%%%%%%%%%%%%%%%%%%%%%%%%%%%%%%%

\newpage

\noindent In this supplement, we will give detailed proofs that have been not covered in the main article. We will frequently refer to equations and expressions in the main text.

\subsection*{Proof of Theorem 1b}

Most of the calculations in the proof of Theorem 1a remain unchanged. We start from the second-order ODE (\ref{T_second_ODE}). Up to a multiplicative constant, the general solution to this second-order linear ODE can be expresed in terms of the Whittaker functions
\begin{equation*}
    T(w)=M(\theta_1,\theta_2,\theta w)+CW(\theta_1,\theta_2,\theta w)
\end{equation*}
where
\begin{equation*}
\theta_1=-\frac{\mu(1-\alpha)\alpha}{|1-2\alpha|};\ \theta_2=\frac{1}{2}\sqrt{1-4\mu\alpha+4\mu\alpha^2};\ \theta=|1-2\alpha|\mu
\end{equation*}
Using formulae for the derivative of a Whittaker function (\ref{Whittaker_derivative}), and short-hand notations (\ref{MW_theorem2}), we can show that
\begin{equation*}
    T'(w)=\Big(\frac{\theta}{2}-\frac{\theta_1}{w}\Big)(M_0(w)+CW_0(w))+\frac{1}{w}\Big[\Big(\frac{1}{2}+\theta_1+\theta_2\Big)M_1(w)-CW_1(w)\Big]
\end{equation*}
We reverse the transformations using (\ref{reverse}). Then, the explicit expression of $F_A(w)$ is given by (\ref{FA_theorem2}), where $w=\frac{1}{1-f}=\frac{\lambda_Bt(1-y)/4+1}{1-y}$ is a function of $t$. Finally, using the initial condition $F_A\Big(w=\frac{1}{1-y}\Big)=x$, we can deduce the following expression for the constant $C$.
\begin{equation*}
    C=\frac{(\frac{1}{2}+\theta_1+\theta_2\big)M_1\big(\frac{1}{1-y}\big)-\kappa M_0\big(\frac{1}{1-y}\big)}{\kappa W_0\big(\frac{1}{1-y}\big)+W_1\big(\frac{1}{1-y}\big)}
\end{equation*}
where
\begin{equation*}
    \kappa=\frac{1}{1-y}\Big(-\mu(1-\alpha)^2x+\frac{\mu(1-2\alpha(1-\alpha)y)}{2}-\frac{\theta}{2}+\theta_1(1-y)\Big)
\end{equation*}

%%%%%%%%%%%%%%%%%%%%%%%%%%%%%%%%%%%%%%%%%%%%%%%%%%%%%%%%%%%%%%%%%%%%

\subsection*{Proof of Theorem 1c}

Combining (\ref{eq:progeny_B}) and (\ref{eq:backward_B}), we obtain the following backward equation for $F_B$
\begin{equation*}
    \frac{dF_B}{dt}=\lambda_B(q^2F_B^2+2pqF_B-F_B+q^2)
\end{equation*}
Using basic integration, we get the following general solution
\begin{equation*}
    F_B(x,y,t)=\frac{Pe^{\lambda_B(p-q)t}-p^2}{Pe^{\lambda_B(p-q)t}-q^2}
\end{equation*}
Using the initial condition $F_B(x,y,0)=y$, we find that $P=\frac{p^2-yq^2}{1-y}$. From (\ref{eq:backward_A}), we need to solve the backward equation for $F_A$
\begin{equation*}
    \frac{dF_A}{dt}=-\lambda_A+\lambda_A((1-\alpha)F_A+\alpha F_B)^2
\end{equation*}
Write $z=Pe^{\lambda_B(p-q)t}$. Then, $F_B(z)=\frac{z-p^2}{z-q^2}$. Note also that $\frac{dz}{dt}=\lambda_B(p-q)z$. Under this transformation, we end up with the following equation for $F_A$ in terms of $z$
\begin{equation*}
    \frac{dF_A}{dz}\lambda_B(p-q)z=-\lambda_AF_A+\lambda_A\bigg((1-\alpha)F_A+\alpha\frac{z-p^2}{z-q^2}\bigg)^2
\end{equation*}
Denoting $\mu=\frac{\lambda_A}{\lambda_B(p-q)}$, 
\begin{equation*}
    \frac{dF_A}{dz}=\frac{\mu}{z}(1-\alpha)^2F_A^2+\frac{\mu}{z}\bigg(2\alpha(1-\alpha)\frac{z-p^2}{z-q^2}-1\bigg)F_A+\frac{\mu}{z}\alpha^2\bigg(\frac{z-p^2}{z-q^2}\bigg)^2
\end{equation*}
Let's denote the coefficients of $F_A^2$, $F_A$ and the constant term by $A$, $B$ and $C$ respectively
\begin{align*}
    A=\frac{\mu(1-\alpha)^2}{z};\ B=\frac{\mu}{z}\bigg(2\alpha(1-\alpha)\frac{z-p^2}{z-q^2}-1\bigg);\ C=\frac{\mu\alpha^2}{z}\bigg(\frac{z-p^2}{z-q^2}\bigg)^2
\end{align*}
As before, this non-linear ODE can be transformed to a Sturm-Liouville differential equation 
\begin{equation*}
    u''+\gamma u'+\beta u=0
\end{equation*}
where $\gamma$ and $\beta$ are given by
\begin{equation*}
    \gamma=-\bigg(\frac{A'}{A}+B\bigg)=\frac{1}{z}-\frac{\mu}{z}\bigg(2\alpha(1-\alpha)\frac{z-p^2}{z-q^2}-1\bigg);\ \beta=AC=\bigg(\frac{\mu\alpha(1-\alpha)}{z}\bigg)^2\bigg(\frac{z-p^2}{z-q^2}\bigg)^2
\end{equation*}
Then, we apply the transformation $u=\Phi T$, where $\Phi$ is a function of $z$ that satisfies the condition $\Phi'=-\gamma\Phi/2$. The equation above reduces to
\begin{equation*}
    T''+\bigg[\frac{1-\mu^2}{4z^2}+\frac{\mu\alpha(1-\alpha)}{z(z-q^2)}\bigg(\frac{\mu(z-p^2)}{z}+\frac{p-q}{z-q^2}\bigg)\bigg]T=0
\end{equation*}
The solution to this ODE can be written in terms of the hypergeometric functions
\begin{equation*}
    T(z)=(z-q^2)^{\frac{1+\theta_2}{2}}\bigg(z^{\frac{1+\theta_1}{2}}F_0^+(z)+Cz^{\frac{1-\theta_1}{2}}F_0^-(z)\bigg)
\end{equation*}
where
\begin{equation*}
    \theta_1=\frac{\mu\sqrt{q^2-4\alpha(1-\alpha)p^2}}{q};\ \theta_2=\frac{\sqrt{q^2+4\alpha(1-\alpha)\mu(q-p)}}{q};\ \theta_3=\mu|1-2\alpha|
\end{equation*}
and $F_0^+$ and $F_0^-$ are short-hand notations defined in (\ref{F01_theorem4}). Using formulae for the first derivative of a hypergeometric function (\ref{Hypergeometric_derivative}), we have that
\begin{align*}
    T'(z)=(z-q^2)^{\frac{1+\theta_2}{2}}&\bigg[\frac{1+\theta_1}{2}z^{-\frac{1-\theta_1}{2}}F_0^+(z)+\frac{(1+\theta_1+\theta_2)^2-\theta_3^2}{4q^2(1+\theta_1)}z^{\frac{1+\theta_1}{2}}F_1^+(z)\\
    &+C\bigg(\frac{1-\theta_1}{2}z^{-\frac{1+\theta_1}{2}}F_0^-(z)+\frac{(1-\theta_1+\theta_2)^2-\theta_3^2}{4q^2(1-\theta_1)}z^{\frac{1-\theta_1}{2}}F_1^-(z)\bigg)\bigg]\\
    &+\frac{1+\theta_2}{2(z-q^2)}T(z)
\end{align*}
By reversing the transformations using (\ref{reverse}), we find that the explicit expression of $F_A(z)$ is given by (\ref{FA_theorem4}). The initial condition in terms of $z$ is $F_A\Big(z=\frac{p^2-yq^2}{1-y}\Big)=x$. The constant $C$ is given as follows.
\begin{align*}
    C=&\bigg[\bar{z}^{\frac{1+\theta_1}{2}}F_0^+(\bar{z})\kappa-\bigg(\frac{1+\theta_1}{2}\bar{z}^{-\frac{1-\theta_1}{2}}F_0^+(\bar{z})+\frac{(1+\theta_1+\theta_2)^2-\theta_3^2}{4q^2(1+\theta_1)}\bar{z}^{\frac{1+\theta_1}{2}}F_1^+(\bar{z})\bigg)\bigg]\nonumber\\
    &\bigg/\bigg[\bigg(\frac{1-\theta_1}{2}\bar{z}^{-\frac{1+\theta_1}{2}}F_0^-(\bar{z})+\frac{(1-\theta_1+\theta_2)^2-\theta_3^2}{4q^2(1-\theta_1)}\bar{z}^{\frac{1-\theta_1}{2}}F_1^-(\bar{z})\bigg)-\bar{z}^{\frac{1-\theta_1}{2}}F_0^-(\bar{z})\kappa\bigg]
\end{align*}
where $\bar{z}=\frac{p^2-yq^2}{1-y}$, and
\begin{equation*}
    \kappa=\frac{1-y}{p^2-yq^2}\bigg(\frac{-\mu(x+y)}{4}+\frac{1+\mu}{2}-\frac{(1+\theta_2)(p^2-yq^2)}{2(p-q)}\bigg)
\end{equation*}

%%%%%%%%%%%%%%%%%%%%%%%%%%%%%%%%%%%%%%%%%%%%%%%%%%%%%%%%%%%%%%%%%%%%

\subsection*{Proof of Theorem 2b}

The probability of extinction by time $t$, $E(t)$, is $F_A(0,0,t)$, where $F_A$ is given by (\ref{FA_theorem2}). From Theorem 1b, at $x=0$ and $y=0$,
\begin{equation*}
    C_0\coloneqq C|_{x=0,y=0}=\frac{(\frac{1}{2}+\theta_1+\theta_2\big)M_1(1)-\kappa_0M_0(1)}{\kappa_0W_0(1)+W_1(1)}
\end{equation*}
where
\begin{equation*}
    \kappa_0\coloneqq\kappa|_{x=0,y=0}=\frac{\mu}{2}-\frac{\theta}{2}+\theta_1
\end{equation*}
and $\theta_1$, $\theta_2$ and $\theta$ are constants defined in terms of the model parameters, $\mu$ and $\alpha$, only. Therefore, it follows that $C_0$ is a finite constant.\\
To study the asymptotic properties of the system, we use the algebraic relationship (\ref{Whittaker_Hypergeometric}) between the Whittaker functions and the confluent hypergeometric functions. Note that, in Theorem 1b, $w_0\coloneqq w|_{x=0,y=0}=\frac{\lambda_B}{4}t+1$ at $x=0$ and $y=0$. Hence, $w_0\to\infty$ as $t\to\infty$, and we can use the large-argument properties (\ref{Hypergeometric_asymptotic}) of $\hypgeo{1}{1}(.)$ and $U(.)$. Due to the presence of the exponential term, it is clear that the $\hypgeo{1}{1}(.)$ dominates $U(.)$ in magnitude when the argument is large. Thus, $M(.)$ dominates $W(.)$ in absolute value asymptotically. This, coupled with the knowledge that $C_0$ is finite in large time, leads to
\begin{equation*}
    \frac{\big(\frac{1}{2}+\theta_1+\theta_2\big)M_1(w_0)-C_0W_1(w_0)}{M_0(w_0)+C_0W_0(w_0)}\sim\bigg(\frac{1}{2}+\theta_1+\theta_2\bigg)\frac{M_1(w_0)}{M_0(w_0)}
\end{equation*}
Again, using (\ref{Whittaker_Hypergeometric}), and the approximation of $\hypgeo{1}{1}(.)$ as given by (\ref{Hypergeometric_asymptotic}), 
\begin{equation*}
    \frac{M_1(w_0)}{M_0(w_0)}\sim\frac{\theta_2-\theta_1-\frac{1}{2}}{\theta w_0}
\end{equation*}
Then, it follows that
\begin{align*}
    E(t)&=\frac{-1}{\mu(1-\alpha)^2}\bigg(\frac{\theta}{2}-\frac{\theta_1}{w_0}+\frac{1}{w_0}\frac{\big(\frac{1}{2}+\theta_1+\theta_2\big)M_1(w_0)-C_0W_1(w_0)}{M_0(w_0)+C_0W_0(w_0)}\\
    &\qquad\qquad\qquad-\frac{\mu(w_0+2\alpha(1-\alpha)(1-w_0))}{2w_0}\bigg)\\
    &\sim\frac{-1}{\mu(1-\alpha)^2}\bigg(\frac{\theta}{2}-\frac{\theta_1}{w_0}+\frac{1}{w_0}\frac{\big(\frac{1}{2}+\theta_1+\theta_2\big)\big(\theta_2-\theta_1-\frac{1}{2}\big)}{\theta w_0}\\
    &\qquad\qquad\qquad-\frac{\mu(w_0+2\alpha(1-\alpha)(1-w_0))}{2w_0}\bigg)\\
    &\sim\frac{-1}{\mu(1-\alpha)^2}\bigg(\frac{\theta}{2}-\frac{\mu(1-2\alpha(1-\alpha))}{2}-\frac{\theta_1}{w_0}-\frac{\mu\alpha(1-\alpha)}{w_0}\bigg)\\
    &=\frac{-1}{\mu(1-\alpha)^2}\bigg(\frac{\mu|1-2\alpha|}{2}-\frac{\mu(1-2\alpha(1-\alpha))}{2}+\frac{\mu\alpha(1-\alpha)}{|1-2\alpha|w_0}-\frac{\mu\alpha(1-\alpha)}{w_0}\bigg)\\
    &=\begin{cases} \frac{\alpha^2}{(1-\alpha)^2}-\frac{2\alpha^2}{(1-\alpha)(1-2\alpha)w_0}, & \mbox{if } 0<\alpha<\frac{1}{2} \\ 1-\frac{2\alpha}{(2\alpha-1)w_0}, & \mbox{if } \frac{1}{2}<\alpha<1 \end{cases}\\
    &\sim\begin{cases} \frac{\alpha^2}{(1-\alpha)^2}-\frac{8\alpha^2}{(1-\alpha)(1-2\alpha)\lambda_Bt}, & \mbox{if } 0<\alpha<\frac{1}{2} \\ 1-\frac{8\alpha}{(2\alpha-1)\lambda_Bt}, & \mbox{if } \frac{1}{2}<\alpha<1 \end{cases}
\end{align*}
The theorem then follows. 

%%%%%%%%%%%%%%%%%%%%%%%%%%%%%%%%%%%%%%%%%%%%%%%%%%%%%%%%%%%%%%%%%%%%

\subsection*{Proof of Theorem 2c}

The probability of extinction by time $t$, $E(t)$, is $F_A(0,0,t)$, where $F_A$ is given by (\ref{FA_theorem4}). Recall from Theorem 1c that $\theta_1=\frac{\mu\sqrt{q^2-4\alpha(1-\alpha)p^2}}{q}$. In the super-critical case, we assume $p<q$ and, therefore, $\theta_1$ is real for all $\alpha\in[0,1]$. Similarly, we can easily show that $\theta_2$ and $\theta_3$ are real constants defined in terms of the model parameters for all $\alpha$. From Theorem 1c, at $x=0$ and $y=0$,
\begin{equation*}
    \bar{z}_0\coloneqq\bar{z}|_{x=0,y=0}=p^2
\end{equation*}
where
\begin{equation*}
    \kappa_0\coloneqq\kappa|_{x=0,y=0}=\frac{1}{p^2}\bigg(\frac{1+\mu}{2}-\frac{(1+\theta_2^2)p^2}{2(p-q)}\bigg)
\end{equation*}
are real and finite. It then follows that
\begin{align*}
    C_0&\coloneqq C|_{x=0,y=0}\\
    &=\bigg[\bar{z_0}^{\frac{1+\theta_1}{2}}F_0^+(\bar{z}_0)\kappa_0-\bigg(\frac{1+\theta_1}{2}\bar{z}_0^{-\frac{1-\theta_1}{2}}F_0^+(\bar{z}_0)+\frac{(1+\theta_1+\theta_2)^2-\theta_3^2}{4q^2(1+\theta_1)}\bar{z}_0^{\frac{1+\theta_1}{2}}F_1^+(\bar{z}_0)\bigg)\bigg]\\
    &\ \bigg/\bigg[\bigg(\frac{1-\theta_1}{2}\bar{z}_0^{-\frac{1+\theta_1}{2}}F_0^-(\bar{z}_0)+\frac{(1-\theta_1+\theta_2)^2-\theta_3^2}{4q^2(1-\theta_1)}\bar{z}_0^{\frac{1-\theta_1}{2}}F_1^-(\bar{z}_0)\bigg)-\bar{z}_0^{\frac{1-\theta_1}{2}}F_0^-(\bar{z}_0)\kappa_0\bigg]
\end{align*}
is finite. Recall that $z=\frac{p^2-yq^2}{1-y}e^{\lambda_B(p-q)t}$. At $x=0$ and $y=0$, $z_0\coloneqq z|_{x=0,y=0}=p^2e^{\lambda_B(p-q)t}$. Since $p<q$, we have that $z_0\to 0$ as $t\to\infty$. From the series expansion of the hypergeometric function (\ref{Hypergeometric_series}), it is easy to see that hypergeometric functions are identically $1$ when evaluated at $0$. Setting $w_0=\frac{1}{z_0}$, we have
\begin{align*}
    &\bigg[\frac{1+\theta_1}{2}z_0^{-\frac{1-\theta_1}{2}}F_0^+(z_0)+\frac{(1+\theta_1+\theta_2)^2-\theta_3^2}{4q^2(1+\theta_1)}z_0^{\frac{1+\theta_1}{2}}F_1^+(z_0)\\
    &+C_0\bigg(\frac{1-\theta_1}{2}z_0^{-\frac{1+\theta_1}{2}}F_0^-(z_0)+\frac{(1-\theta_1+\theta_2)^2-\theta_3^2}{4q^2(1-\theta_1)}z_0^{\frac{1-\theta_1}{2}}F_1^-(z_0)\bigg)\bigg]\\
    &\bigg/\bigg[z_0^{\frac{1+\theta_1}{2}}F_0^+(z_0)+C_0z_0^{\frac{1-\theta_1}{2}}F_0^-(z_0)\bigg]\\
    &\sim\bigg[\frac{1+\theta_1}{2}w_0^{\frac{1-\theta_1}{2}}+\frac{(1+\theta_1+\theta_2)^2-\theta_3^2}{4q^2(1+\theta_1)}w_0^{-\frac{1+\theta_1}{2}}\\
    &+C_0\bigg(\frac{1-\theta_1}{2}w_0^{\frac{1+\theta_1}{2}}+\frac{(1-\theta_1+\theta_2)^2-\theta_3^2}{4q^2(1-\theta_1)}w_0^{-\frac{1-\theta_1}{2}}\bigg)\bigg]\\
    &\bigg/\bigg[w_0^{-\frac{1+\theta_1}{2}}+C_0w_0^{-\frac{1-\theta_1}{2}}\bigg]
\end{align*}
Clearly, $w_0\to\infty$ as $t\to\infty$, so we are interested in the behavior of the expression above when $w_0$ is large. Since $\mu=\frac{\lambda_A}{\lambda_B(p-q)}$ is negative when $p<q$, it follows that $\theta_1$ is also negative. By comparing the exponents, $w_0^{\frac{1-\theta_1}{2}}$ is the dominant term in the numerator, and $w_0^{-\frac{1+\theta_1}{2}}$ is the dominant term in the denominator. This, coupled with the fact that $C_0$ is finite, gives us the following approximation
\begin{align*}
    &\bigg[\frac{1+\theta_1}{2}w_0^{\frac{1-\theta_1}{2}}+\frac{(1+\theta_1+\theta_2)^2-\theta_3^2}{4q^2(1+\theta_1)}w_0^{-\frac{1+\theta_1}{2}}\\
    &+C_0\bigg(\frac{1-\theta_1}{2}w_0^{\frac{1+\theta_1}{2}}+\frac{(1-\theta_1+\theta_2)^2-\theta_3^2}{4q^2(1-\theta_1)}w_0^{-\frac{1-\theta_1}{2}}\bigg)\bigg]\\
    &\bigg/\bigg[w_0^{-\frac{1+\theta_1}{2}}+C_0w_0^{-\frac{1-\theta_1}{2}}\bigg]\sim\frac{1+\theta_1}{2}w_0
\end{align*}

\noindent Assuming $-1<\theta_1<0$, it follows that
\begin{align*}
    &\bigg[\frac{1+\theta_1}{2}w_0^{\frac{1-\theta_1}{2}}+\frac{(1+\theta_1+\theta_2)^2-\theta_3^2}{4q^2(1+\theta_1)}w_0^{-\frac{1+\theta_1}{2}}\\
    &+C_0\bigg(\frac{1-\theta_1}{2}w_0^{\frac{1+\theta_1}{2}}+\frac{(1-\theta_1+\theta_2)^2-\theta_3^2}{4q^2(1-\theta_1)}w_0^{-\frac{1-\theta_1}{2}}\bigg)\bigg]\\
    &\bigg/\bigg[w_0^{-\frac{1+\theta_1}{2}}+C_0w_0^{-\frac{1-\theta_1}{2}}\bigg]\sim\frac{1+\theta_1}{2}w_0-C_0\theta_1w_0^{1+\theta_1}
\end{align*}
Then, for large $t$,
\begin{align*}
    E(t)&=\frac{-z_0}{\mu(1-\alpha)^2}\bigg\{\bigg[\frac{1+\theta_1}{2}z_0^{-\frac{1-\theta_1}{2}}F_0^+(z_0)+\frac{(1+\theta_1+\theta_2)^2-\theta_3^2}{4q^2(1+\theta_1)}z_0^{\frac{1+\theta_1}{2}}F_1^+(z_0)\\
    &+C_0\bigg(\frac{1-\theta_1}{2}z_0^{-\frac{1+\theta_1}{2}}F_0^-(z_0)+\frac{(1-\theta_1+\theta_2)^2-\theta_3^2}{4q^2(1-\theta_1)}z_0^{\frac{1-\theta_1}{2}}F_1^-(z_0)\bigg)\bigg]\\
    &\bigg/\bigg[z_0^{\frac{1+\theta_1}{2}}F_0^+(z_0)+C_0z_0^{\frac{1-\theta_1}{2}}F_0^-(z_0)\bigg]+\frac{1}{2}\frac{1+\theta_2}{z_0-q^2}\\
    &-\frac{1}{2}\bigg[\frac{1}{z_0}-\frac{\mu}{z_0}\bigg(2\alpha(1-\alpha)\frac{z_0-p^2}{z_0-q^2}-1\bigg)\bigg]\bigg\}\\
    &\sim\frac{-z_0}{\mu(1-\alpha)^2}\bigg[\frac{1+\theta_1}{2}\frac{1}{z_0}-C_0\theta_1\frac{1}{z_0^{1+\theta_1}}-\frac{1}{2}\frac{1+\theta_2}{q^2}-\frac{1}{2z_0}+\frac{\mu}{2z_0}\bigg(2\alpha(1-\alpha)\frac{p^2}{q^2}-1\bigg)\bigg]\\
    &\sim\frac{-z_0}{\mu(1-\alpha)^2}\bigg[\frac{\theta_1}{2z_0}+\frac{\mu}{2z_0}\bigg(2\alpha(1-\alpha)\frac{p^2}{q^2}-1\bigg)-C_0\theta_1\frac{1}{z_0^{1+\theta_1}}\bigg]\\
    &=\frac{1}{2(1-\alpha)^2}\bigg(1-\frac{2\alpha(1-\alpha)p^2}{q^2}-\frac{\sqrt{q^2-4\alpha(1-\alpha)p^2}}{q}\bigg)+\frac{C_0\theta_1p^2}{\mu(1-\alpha)^2}e^{-\lambda_B(p-q)\theta_1t}
\end{align*}
Again, it is straight-forward to show that the constant part is between $0$ and $1$ and, therefore, represents a valid probability of extinction.

\noindent The same approach works for the case $\theta_1<-1$. We can show that
\begin{align*}
    &\bigg[\frac{1+\theta_1}{2}w_0^{\frac{1-\theta_1}{2}}+\frac{(1+\theta_1+\theta_2)^2-\theta_3^2}{4q^2(1+\theta_1)}w_0^{-\frac{1+\theta_1}{2}}\\
    &+C_0\bigg(\frac{1-\theta_1}{2}w_0^{\frac{1+\theta_1}{2}}+\frac{(1-\theta_1+\theta_2)^2-\theta_3^2}{4q^2(1-\theta_1)}w_0^{-\frac{1-\theta_1}{2}}\bigg)\bigg]\\
    &\bigg/\bigg[w_0^{-\frac{1+\theta_1}{2}}+C_0w_0^{-\frac{1-\theta_1}{2}}\bigg]\sim\frac{1+\theta_1}{2}w_0+\frac{(1+\theta_1+\theta_2)^2-\theta_3^2}{4q^2(1+\theta_1)}
\end{align*}
Then, the probability of extinction in large time is approximated by
\begin{align*}
    E(t)&\sim\frac{1}{2(1-\alpha)^2}\bigg(1-\frac{2\alpha(1-\alpha)p^2}{q^2}-\frac{\sqrt{q^2-4\alpha(1-\alpha)p^2}}{q}\bigg)\\
    &-\frac{p^2}{\mu(1-\alpha)^2}\bigg[\frac{(1+\theta_1+\theta_2)^2-\theta_3^2}{4q^2(1+\theta_1)}-\frac{1+\theta_2}{2q^2}\bigg]e^{-\lambda_B(p-q)\theta_1t}
\end{align*}
The theorem then follows. 

\end{document}